\theoremstyle{plain}
\newtheorem{Theorem}{Theorem}
\newtheorem{Lemma}[Theorem]{Lemma}
\newcommand{\auau}{{$\hhb{-2}_{\hbox{\large\~{}}}\hhb{-1}$}}
\newcommand{\hhb}[1]{{\hbox to#1pt{}}}
\newcommand{\euS}{{\mathfrak{S}}}
\newcommand{\KS}{K^\euS}
\font\tensc=cmcsc12 scaled\magstep0
\address{School of Mathematical Sciences, Tel Aviv University, 
Ramat Aviv, Tel Aviv 69978, Israel}
\email{barylior@post.tau.ac.il}
\address{Fachbereich Mathematik und Statistik, University of 
Konstanz, 78457 Konstanz, Germany}
\email{arno.fehm@uni-konstanz.de}
\address{Department of Mathematics, University of Pennsylvania, 
DRL, 209 S 33rd Street, Phila\-delphia, PA 19104, USA}
\email{pop@math.upenn.edu \ \ \ {\it URL}: http://math.penn.edu/\auau pop}
\begin{document}

\title{\large On fields of totally $\euS$-adic numbers
\vskip7pt
{\small\bf --- With an appendix by Florian Pop ---}}
\author{Lior Bary-Soroker \and Arno Fehm}
\thanks{The authors are indebted to Pierre D\`ebes for pointing out 
to them the result in \cite{DebesHaran}. They would also like to thank 
Sebastian Petersen for motivation to return to the subject of this note.
This research was supported by the Lion Foundation Konstanz -- 
Tel Aviv and the Alexander von Humboldt Foundation.}

\begin{abstract}
Given a finite set $\euS$ of places of a number field,
we prove that the field of totally $\euS$-adic algebraic numbers 
is not Hilbertian.
\end{abstract}

\maketitle

\noindent
The field of totally real algebraic numbers $\mathbb{Q}^{\rm tr}$, the field of totally 
$p$-adic algebraic numbers $\mathbb{Q}^{{\rm t}p}$, and, more generally, fields 
of totally $\euS$-adic algebraic numbers $\mathbb{Q}^\euS$, where $\euS$ is 
a finite set of places of $\mathbb{Q}$, play an important role in number theory 
and Galois theory, see for example \cite{FHV94, Pop, Taylor, HJPe}.
The objective of this note is to show that none of these fields is Hilbertian
(see \cite[Chapter 12]{FriedJarden} for the definition of a Hilbertian field).

Although it is immediate that $\mathbb{Q}^{\rm tr}$ is not Hilbertian,
it is less clear whether the same holds for $\mathbb{Q}^{{\rm t}p}$.
For example, every finite group that occurs as a Galois group over 
$\mathbb{Q}^{\rm tr}$ is generated by involutions (in fact, the converse 
also holds, see \cite{FHV93}) although over a Hilbertian field all finite 
abelian groups (for example) occur. In contrast, over $\mathbb{Q}^{{\rm t}p}$ 
every finite group occurs, see \cite{Efrat}. In fact, although (except in the 
case of $\mathbb{Q}^{\rm tr}$) it was not clear whether these fields are 
actually Hilbertian, certain weak forms of Hilbertianity were proven and 
used, both explicitly and implicitly, for example in \cite{FHV93, HJPd}.
Also, any proper finite extension of any of these fields is actually 
Hilbertian, see \cite[Theorem 13.9.1]{FriedJarden}.

The non-Hilbertianity of $\mathbb{Q}^{{\rm t}p}$ was implicitly 
stated and proven in \cite[Examples 5.2]{DebesHaran} but this result 
seems to have escaped the notice of the community and was forgotten. 
We give a short elementary proof (which is closely related to the proof in 
\cite{DebesHaran}) of the following more general result.

\begin{Theorem}\label{thm}
For any finite set $\euS$ of real archimedean or ultrametric discrete 
absolute values on a field $K$, the maximal extension $\KS$ of $K$ in 
which every element of $\euS$ totally splits is not Hilbertian.
\end{Theorem}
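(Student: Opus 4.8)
The plan is to refute Hilbertianity of $\KS$ in the strongest possible way: to produce a single polynomial $f(T,X)\in K[T,X]$, separable in $X$ and irreducible over $\KS(T)$, every specialization of which $f(t,X)$ with $t\in\KS$ is reducible over $\KS$. This empties the corresponding separable Hilbert set and contradicts the definition of a Hilbertian field. Write $M=\KS$ and fix, for each $v\in S$, an embedding $\overline K\hookrightarrow\overline{K_v}$. By the total-splitting property every element of $M$ and all of its $K$-conjugates land in $K_v$, so $M\subseteq K_v$; moreover $M\supseteq K$ is dense in $K_v$, and by weak approximation $M$ is dense in $\prod_{v\in S}K_v$. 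The reformulation I would use throughout is that an algebraic element $\beta$ lies in $M$ if and only if its minimal polynomial over $K$ splits into linear factors over $K_v$ for every $v\in S$. Thus ``$f(t,X)$ has a factor over $M$'' is a purely local-global condition: for each $t$ I must exhibit a factorization of $f(t,X)$ whose coefficients are totally $S$-adic.

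For the archimedean template this is immediate and explains the remark for $\Q_{\rm tr}$. Take $f(T,X)=X^2-(T^2+1)$. Since $T^2+1$ has distinct roots it is not a square in $\overline K(T)$, so $f$ is geometrically irreducible, hence irreducible over $M(T)$. For any $t\in M$ and any $K$-conjugate $t_i$ of $t$, the element $t_i$ is real at every $v\in S$, whence $t_i^2+1>0$ and $\sqrt{t_i^2+1}\in\R=K_v$. Therefore all conjugates $\pm\sqrt{t_i^2+1}$ of $\sqrt{t^2+1}$ are real, so $\sqrt{t^2+1}\in M$ and $f(t,X)=(X-\sqrt{t^2+1})(X+\sqrt{t^2+1})$ splits over $M$. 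Hence, when every $v\in S$ is archimedean, the Hilbert set of $f$ is empty and $M$ is not Hilbertian.

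For the general statement I would try to run the same scheme place-by-place: for each $v\in S$, construct a geometrically irreducible cover in which the roots of every fiber $f(t,X)$ carry a pairing stable under the decomposition group at $v$, so that the associated symmetric functions descend to $K_v$; arranging one such pairing to be simultaneously $\Gal(\overline K/K)$-equivariant would yield a genuine factor of $f(t,X)$ over $M$ for all $t$. One would then have to merge the local data at the finitely many $v\in S$ into a single $f\in K[T,X]$ by weak approximation on its coefficients, while preserving geometric irreducibility.

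The main obstacle is precisely the ultrametric case, where the two devices available at a real place---positivity of $t^2+1$, and the fact that every real cubic resolvent has a real root---have no analogue. Unlike over $\R$, where positivity confines $t^2+1$ to the proper subgroup of squares, over an ultrametric $K_v$ the value $d(s)$ cannot be confined to $(K_v^\times)^2$: as $s$ ranges over the dense subset $M$, both the parity of $v(d(s))$ and the residue datum of $d(s)$ vary, pushing $d(s)$ out of the squares. Consequently no quadratic cover $X^2-d(T)$ splits at every fiber over $K_v$, and---since $M$ is dense in $K_v$---any factorization criterion of the naive shape ``$d(t)$ is a square'' or ``a fixed auxiliary polynomial acquires a totally $S$-adic root'' is defeated by specializing $t$ across the relevant classes. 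Overcoming this is the heart of the matter, and I expect it to require genuinely exploiting that $M$ is pseudo $S$-adically closed, together with a valuation-theoretic Krasner argument along the unramified, residue- and value-group-preserving dense embedding $M\hookrightarrow K_v$, rather than any single explicit polynomial identity.
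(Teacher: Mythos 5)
Your archimedean argument is correct, but it only proves the theorem when every place in $S$ is real archimedean --- essentially the classical observation (which the paper itself calls immediate for $\Q_{\rm tr}$) that a totally real field cannot be Hilbertian. The entire content of the theorem is the ultrametric case, and there your proposal stops: you do not construct anything, and instead assert that no explicit polynomial identity can work, because for a polynomial $d(T)$ the values $d(s)$, $s$ ranging over the dense subfield $M\subseteq K_v$, sweep through all square classes of $K_v^\times$. That heuristic is where the gap lies, and it is in fact false as an obstruction: it only rules out criteria driven by a \emph{polynomial} substitution. The paper defeats it with a \emph{rational} substitution whose image is valuation-theoretically confined. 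Concretely, with $t\in K\smallsetminus\{0,-1\}$ chosen by weak approximation to be a uniformizer at every ultrametric place of $S$, the function $\gamma(Y,t)=Yt/(Y^2+t)$ satisfies $v(\gamma(y,t))>0$ for \emph{every} $y$ (Lemma \ref{lem:vg0}: in all three cases $v(y)=0$, $v(y)<0$, $v(y)>0$ one gets $v(y^{-1}+t^{-1}y)<0$). Density of $M$ in $K_v$ is powerless against this: the whole field is mapped into the maximal ideal.

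Given that, the paper's proof is exactly the ``strongest possible refutation'' you announced, carried out with one explicit quadratic: $f(X,Z)=X^2+X-Z^2$ composed with $\gamma$. The polynomial $f(X,\gamma(Y,t))$ is irreducible over $\KS(Y)$ (Lemma \ref{lem:irred}; the characteristic-$2$ case is why one wants $t$ a non-square, which a uniformizer automatically is), yet for every specialization $y\in\KS$ the value $z=\gamma(y,t)$ has $v(z)>0$ at every extension of every ultrametric place, so $\overline{f(X,z)}=X(X+1)$ has a simple root and Hensel's lemma produces a root of $f(X,z)$ in every Henselization; at the real places, $z^2\geq 0$ and the surjectivity of $\xi\mapsto\xi^2+\xi$ on nonnegative elements of a real closure produce a root there too. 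Since $\KS$ is the intersection of all these Henselizations and real closures, and $f(X,z)$ is quadratic, it splits over $\KS$ for every $y$: the Hilbert set is empty. So no pseudo-$S$-adically-closed machinery, Krasner argument, or merging of local pairings is needed; the missing idea in your proposal is precisely the substitution $Z=\gamma(Y,t)$, which converts ``reducibility of every fiber'' into ``Hensel's lemma applies to every fiber,'' a condition immune to the square-class variation you correctly observed for polynomial substitutions.
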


Note that $\KS$ is the intersection of all Henselizations and real 
closures of $K$ with respect to elements of $\euS$. We would like to 
stress that $\euS$ does not necessarily consist of {\em local} primes 
in the sense of \cite{HJPe}.

After this note was written, it turned out that there is an unpublished 
manuscript of Pop with a different proof of Theorem \ref{thm} (which 
is less explicit but works in a more general setting), see the~Appendix 
at the end of this paper.

Let 
\begin{eqnarray*}
 \gamma(Y,T)&=&(Y^{-1}+T^{-1}Y)^{-1} \,\;=\;\,\frac{YT}{Y^2+T}
\end{eqnarray*}
and
\begin{eqnarray*}
 f(X,Z) &=& X^2+X-Z^2. \\
\end{eqnarray*}

\begin{Lemma}\label{lem:vg0}
If $(F,v)$ is a discrete valued field with uniformizer $t\in F$, then
$v(\gamma(y,t))>0$ for each $y\in F$.
\end{Lemma}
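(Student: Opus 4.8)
The plan is to compute $v(\gamma(y,t))$ directly from the closed form $\gamma(y,t)=yt/(y^2+t)$ and to reduce the whole statement to a claim about the single integer $v(y)$. First I would dispose of the trivial case $y=0$, where $\gamma(0,t)=0$ and $v(0)=\infty>0$. For $y\neq 0$ I would use additivity of $v$ together with the fact that $t$ is a uniformizer, so that $v(t)=1$, to write
\[
v(\gamma(y,t)) = v(y)+v(t)-v(y^2+t) = v(y)+1-v(y^2+t).
\]
The goal thus becomes showing $v(y^2+t)<v(y)+1$.

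The key step, and the only substantive point, is to control $v(y^2+t)$. The ultrametric inequality alone gives only $v(y^2+t)\geq\min\{2v(y),1\}$, which is the wrong direction for my purposes. The crucial observation is a parity argument: since $v(t)=1$ is odd while $v(y^2)=2v(y)$ is even, one has $v(y^2)\neq v(t)$, and therefore the ultrametric inequality becomes an \emph{equality},
\[
v(y^2+t)=\min\{2v(y),1\}.
\]
This exactness is precisely what I need, and it is the heart of why the hypothesis is about a uniformizer rather than an arbitrary element of positive value.

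With this equality in hand I would finish by a short case distinction on the integer $v(y)$, of which there are only two possibilities. If $v(y)\leq 0$, then $2v(y)<1$, so $v(y^2+t)=2v(y)$ and $v(\gamma(y,t))=1-v(y)\geq 1>0$; if $v(y)\geq 1$, then $2v(y)>1$, so $v(y^2+t)=1$ and $v(\gamma(y,t))=v(y)\geq 1>0$. In either case the conclusion follows. I do not expect any serious obstacle; the one point demanding care is the parity argument establishing equality in the ultrametric inequality, since without the oddness of $v(t)$ one cannot rule out cancellation in $y^2+t$, and the conclusion would genuinely fail.
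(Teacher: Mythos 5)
Your proof is correct. It reaches the same conclusion by a noticeably different organization than the paper's. The paper argues on the reciprocal form $\gamma(y,t)=(y^{-1}+t^{-1}y)^{-1}$ and splits into three cases according to the sign of $v(y)$, checking in each case that the two summands $y^{-1}$ and $t^{-1}y$ have distinct values whose minimum is negative, whence $v(y^{-1}+t^{-1}y)<0$ and $v(\gamma(y,t))>0$. You instead work with the fraction $yt/(y^2+t)$ and isolate the entire content of the lemma in a single parity observation: $v(y^2)=2v(y)$ is even while $v(t)=1$ is odd, so the ultrametric inequality for $y^2+t$ is an equality, after which a trivial two-case computation finishes. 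The two arguments rest on the same underlying fact --- both amount to $2v(y)\neq 1$, which is exactly where discreteness and the uniformizer hypothesis enter (in the paper this surfaces only in the third case, as ``$v(t^{-1}y)\geq 0$ since $t$ is a uniformizer'') --- but your packaging has two small advantages: it makes explicit why $y^2+t\neq 0$, i.e.\ why $\gamma(y,t)$ is defined for every $y$, and it covers the case $y=0$, both of which the paper passes over silently (its reciprocal form does not even make sense at $y=0$). Conversely, the paper's version avoids any appeal to a normalized value group by phrasing everything through signs of valuations, but this is a cosmetic difference; your proof is complete as stated.
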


\begin{proof}
If $v(y)=0$, then $v(t^{-1}y)<0=v(y^{-1})$, so $v(y^{-1}+t^{-1}y)<0$.
If $v(y)<0$, then $v(y^{-1})>0$ and $v(t^{-1}y)<0$, so $v(y^{-1}+t^{-1}y)<0$.
If $v(y)>0$, then $v(y^{-1})<0$ and $v(t^{-1}y)\geq 0$ since $t$ is 
a uniformizer, so again $v(y^{-1}+t^{-1}y)<0$. Thus, in each case, 
$v(\gamma(y,t))=-v(y^{-1}+t^{-1}y)>0$.
\end{proof}

\begin{Lemma}\label{lem:irred}
Let $F$ be a field and $t\in F\smallsetminus\{0,-1\}$. 
If ${\rm char}(F)=2$, assume in addition that $t$ is not a square in $F$.
Then $f(X,\gamma(Y,t))$ is irreducible over $F(Y)$.
\end{Lemma}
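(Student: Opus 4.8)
The plan is to use that $f(X,Z)$ has degree $2$ in $X$, so that $f(X,\gamma(Y,t))$ is irreducible over $F(Y)$ if and only if it has no root in $F(Y)$; I would obstruct the existence of a root by exhibiting a single place of $F(Y)/F$ at which the relevant quantity misbehaves. Throughout write $\gamma=\gamma(Y,t)=\frac{tY}{Y^2+t}\in F(Y)$, and recall two elementary facts: a nonzero element of a discretely valued field is a square only if its value is even; and in characteristic $2$ the Artin--Schreier map $\wp(u)=u^2+u$ satisfies that $v(\wp(u))$ is nonnegative or a negative \emph{even} integer, for every discrete valuation $v$.

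First suppose ${\rm char}(F)\neq 2$. Completing the square shows that $f(X,\gamma)=X^2+X-\gamma^2$ has a root in $F(Y)$ if and only if its discriminant $1+4\gamma^2$ is a square in $F(Y)$. I would compute
\[
 1+4\gamma^2=\frac{(Y^2+t)^2+4t^2Y^2}{(Y^2+t)^2},
\]
so that it suffices to show the numerator $N(Y)=(Y^2+t)^2+4t^2Y^2$ is not a square in $F(Y)$; in fact I claim $N$ is squarefree. Viewing $N=u^2+(2t+4t^2)u+t^2$ as a quadratic in $u=Y^2$, its discriminant is $16t^3(t+1)$, which is nonzero precisely because $t\neq 0,-1$ and ${\rm char}(F)\neq 2$. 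Hence $N=(Y^2-u_1)(Y^2-u_2)$ with $u_1\neq u_2$ and $u_1u_2=t^2\neq 0$, so $N$ has four distinct roots and is squarefree. Consequently some monic irreducible $\pi\in F[Y]$ divides $N$ exactly once; since $\pi\neq Y^2+t$ (as $N\equiv 4t^2Y^2\not\equiv 0\pmod{Y^2+t}$), the corresponding valuation gives $v_\pi(1+4\gamma^2)=v_\pi(N)=1$, which is odd, so $1+4\gamma^2$ is not a square and $f(X,\gamma)$ is irreducible.

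Now suppose ${\rm char}(F)=2$, so that $f(X,\gamma)=X^2+X+\gamma^2$ and $t$ is a nonsquare in $F$. Here $f(X,\gamma)$ has a root in $F(Y)$ if and only if $\gamma^2\in\wp(F(Y))$. Since $\wp(\gamma)=\gamma^2+\gamma$, we have $\gamma^2\equiv\gamma\pmod{\wp(F(Y))}$, so this is equivalent to $\gamma\in\wp(F(Y))$. Because $t$ is not a square, $Y^2+t$ is irreducible over $F$ and defines a place $v$ of $F(Y)/F$ with $v(Y^2+t)=1$; as $tY$ is a unit there, $v(\gamma)=-1$. An element of $\wp(F(Y))$ can only have even (or nonnegative) value at $v$, so $\gamma\notin\wp(F(Y))$, whence $\gamma^2\notin\wp(F(Y))$ and $f(X,\gamma)$ is irreducible.

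The only real obstacle is locating, in each characteristic, a single place witnessing the obstruction, and checking that the excluded values $t\in\{0,-1\}$ (respectively, the squares $t$ in characteristic $2$) are exactly those for which the witness degenerates: in the first case the discriminant $16t^3(t+1)$ vanishes and $N$ acquires a repeated factor, and in the second case $Y^2+t$ becomes reducible and $v(\gamma)$ turns even. Everything else is a routine valuation computation.
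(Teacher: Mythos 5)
Your proof is correct. In characteristic $2$ it is essentially the paper's argument in different packaging: the paper substitutes $X\mapsto X+\gamma(Y,t)$ to reduce to $X^2+X+\gamma(Y,t)$ and then notes that a root would have value $-\tfrac12$ at the $(Y^2+t)$-adic valuation; your reduction $\gamma^2\equiv\gamma\pmod{\wp(F(Y))}$ followed by the parity obstruction at the same place is the same mechanism. In characteristic $\neq 2$ you take a genuinely different route. The paper shows directly that the quartic $N=(Y^2+t)^2+4(tY)^2$ is not a square in $F[Y]$ by writing a putative square root as $Y^2+aY+b$ and comparing coefficients, which forces $t\in\{0,-1\}$. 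You instead prove $N$ is squarefree (via the discriminant $16t^3(t+1)$ of $N$ viewed as a quadratic in $u=Y^2$) and then exhibit a prime of odd valuation. The paper's computation is shorter and completely self-contained; yours explains \emph{why} the excluded values of $t$ are exactly $0$ and $-1$ (they are precisely the degenerations of that discriminant), and it makes the two characteristic cases run on the same engine, namely an odd-valuation obstruction to being a square or an Artin--Schreier image.

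One loose joint in the characteristic $\neq 2$ case: the equality $v_\pi(1+4\gamma^2)=v_\pi(N)$ requires $\pi\nmid Y^2+t$, which is stronger than $\pi\neq Y^2+t$ when $Y^2+t$ is reducible over $F$ --- and reducibility is possible here, since it happens exactly when $-t$ is a square in $F$, which the hypotheses do not exclude. Your congruence $N\equiv 4t^2Y^2\pmod{Y^2+t}$ does give the stronger statement with one more line: a common irreducible factor of $N$ and $Y^2+t$ would divide $N-(Y^2+t)^2=4t^2Y^2$, hence be a scalar multiple of $Y$, but $Y\nmid Y^2+t$ since $t\neq0$. Alternatively no repair is needed at all: $v_\pi(1+4\gamma^2)=v_\pi(N)-2\,v_\pi(Y^2+t)$ is odd whether or not $\pi$ divides $Y^2+t$, and oddness is all your argument uses.
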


\begin{proof}
If ${\rm char}(F)\neq 2$, then
$f(X,\gamma(Y,t))$ is reducible if and only if the discriminant 
$1+4\gamma(Y,t)^2$ is a square in $F(Y)$. This is the case if and 
only if $(Y^2+t)^2+4(tY)^2$ is a square. 
Writing 
\[
 (Y^2+t)^2+4(tY)^2  = (Y^2+aY+b)^2
\] 
and comparing coefficients we get that
$a=0$,  $b^2=t^2$, and $a^2+2b=2t(1+2t)$. Hence, $t=0$ or $t=-1$.
 
If ${\rm char}(F)=2$, then
$f(X,\gamma(Y,t))$ is irreducible if and only if 
$$
 g(X):=f(X+\gamma(Y,t),\gamma(Y,t))=X^2+X+\gamma(Y,t)
$$ 
is irreducible.
If $v$ denotes the normalized valuation on $F(Y)$ corresponding 
to the irreducible polynomial $Y^2+t\in F[Y]$, then $v(\gamma(Y,t))=-1$.
This implies that a zero $x$ of $g(X)$ in $F(Y)$ would satisfy $v(x)=-\frac{1}{2}$,
so $g(X)$ has no zero in $F(Y)$ and is therefore irreducible.
\end{proof}

\begin{proof}[Proof of Theorem 1]
Without loss of generality assume that $\euS\neq\emptyset$ and that 
the absolute values in $\euS$ are pairwise inequivalent. Let $F=\KS$.

The weak approximation theorem gives an element $t\in K\smallsetminus\{0,-1\}$ 
that is a uniformizer for each of the ultrametric absolute values in $\euS$.
Clearly, if $\euS$ contains an ultrametric discrete absolute value (in particular 
if ${\rm char}(K)=2$), then $t$ is not a square in $F$. Hence, by Lemma 
\ref{lem:irred}, $f(X,\gamma(Y,t))$ is irreducible over $F(Y)$.

Assume, for the purpose of contradiction, that $F$ is Hilbertian. Then there 
exists $y\in F$ such that $f(X,\gamma(y,t))$ is defined and irreducible over $F$.

Let {$|\cdot|\in\euS$}. 
If $|\cdot|$ is archimedean (this means we are in the case ${\rm char}(K)\neq 2$), 
let $\leq$ be an ordering corresponding to an extension of $|\cdot|$ to $F$, 
and let $E$ be a real closure of $(F,\leq)$. Since $\gamma(y,t)^2\geq 0$,
there exists $x\in E$ such that $f(x,\gamma(y,t))=0$ (note that the map 
$E_{\geq0}\rightarrow E_{\geq0}$, $\xi\mapsto \xi^2+\xi$ is surjective).
If {$|\cdot|$} is ultrametric and $v$ is a discrete valuation corresponding 
to an extension of $|\cdot|$ to $F$, let $E$ be a Henselization of $(F,v)$.
Since $v(\gamma(y,t))>0$ by Lemma \ref{lem:vg0}, 
$f(X,\gamma(y,t))\in\mathcal{O}_v[X]$ and 
$$
 \overline{f(X,\gamma(y,t))}=X(X+1)
$$ 
has a simple root,
so by Hensel's Lemma there exists $x\in E$ with $f(x,\gamma(y,t))=0$.

Thus in each case, $f(X,\gamma(y,t))$ has a root in $E$, so since it is of 
degree $2$ all of its roots are in $E$. Since $F$ is the intersection over 
all such $E$, all roots of $f(X,\gamma(y,t))$ lie in $F$, contradicting
the irreducibility of $f(X,\gamma(y,t))$.
\vskip0pt
\end{proof}
\eject
\centerline{{\tensc APPENDIX}: 
           {\bf THE TOTALLY ${\euS}$-ADIC IS NOT HILBERTIAN}}
\vskip5pt
\centerline{{\tensc florian pop*}}
\label{appendix}
\let\thefootnote\relax\footnotetext{* Variants of 1990/2013. 
                  Last supported by the NSF grant DMS-1101397.} 
\vskip10pt
\noindent
Let $K$ be an arbitrary field, and ${\euS}$ be a {\bf finite} set of orderings 
and/or non-trivial valuations of~$\,K$. We denote by $\,K^{\euS}\,|\,K\,$ the
maximal subextesion of a separable closure $K^{\rm sep}|K$ of $K$
in which all $v\in{\euS}$ are totally split. For $v\in{\euS}$ we will denote 
by $K_v\subset K^{\rm sep}$
a \underbar{fixed} real closure of $K$ 
with respect to $v$ in the case $v$ is an ordering, respectively a 
\underbar{fixed} Henselization of $K$ with respect to $v$ in the case 
$v$ is a valuation. Recall that $K_v\subset K^{\rm sep}$ is unique 
up to $G_K$-conjugation, where $G_K$ is the absolute Galois group 
of $K$. One has:
\vskip7pt
1) $K^{\euS}=\cap_{v\in{\euS}}\cap_{\sigma\in G_K}K_v^\sigma$.
\vskip7pt
\noindent
In particular, if $K_{v_0}=K^{\rm sep}$ for some $v_0\in{\euS}$, then 
$K^{\euS}$ does not depend on $v_0$. Therefore, without loss of 
generality, we suppose that $K_v\neq K^{\rm sep}$ for all $v\in{\euS}$.
Further, for polynomials $r(X)\in K^{\euS}[X]$ and their $G_K$-conjugates 
$r^\sigma(X)\in K^{\euS}[X]$ one has: 
\vskip7pt
2) {\it $r(X)$ has all its roots in $K^{\euS}$ iff $\,r^\sigma(X)$ has 
all its roots in $K_v$ for all $v\in{\euS}$, $\sigma\in G_K$.}
\vskip7pt
Let $L\hhb1|K$ be all the finite Galois subextensions of 
$K^{\rm sep}|K$. Then $K^{\rm sep}=\cup_L L$, and since 
$K_v\subset K^{\rm sep}$ is a strict inclusion, there exits $L\hhb1|K$ 
finite Galois such that $L$ is not contained in~$K_v$. In particular, 
since the family $(L\hhb1|K)_L$ is filtered, there exists $L\hhb1|K$ 
such that $L$ is not contained in any $K_v$, $v\in{\euS}$. Translated 
into the language of polynomials, we have the following: Let 
$p(X)\in K[X]$ be a monic polynomial having splitting field $L\hhb1|K$ 
and degree $\deg\big(p(X)\big)=[L:K]$. Then $L=k[\alpha]$ for every 
root $\alpha$ of $p(X)$. Hence the fact that $L$ is not contained 
in $K_v$ translates into:  
\vskip5pt
3) {\it There exist non-constant polynomials $p(X) \in K[X]$ 
having no roots in  $K_v$, $v\in{\euS}$.\/} 
\vskip5pt
\noindent
Equivalently, by general decomposition theory for valuations 
and orderings, it follows that $p(K_v)$ is {\it bounded away from 
zero,\/} see e.g.,~\cite{PZ}, i.e., there exists a $v$-neighborhood 
$U_v$ of $0\in K_v$ such that $U_v\cap p(K_v)$ is empty. In 
particular, for every $v\in{\euS}$ there exists $t_v\in K^\times$ 
such that $v(t_v) < v\big(p(x)\big)$ for all $x\in K_v$.$^1$\footnote{{$^1$ 
We write $v(ab)=v(a)v(b)$ for valuations, and $v(a)=\max\{a,-a\}$ 
if $v$ is an ordering.}} Taking into account that the non-zero elements 
$t\in K^\times$ approximate $0\in K_v$ simultaneously for $v\in{\euS}$, 
we get:
\vskip5pt
4) {\it Let $t\in K^\times$ satisfy $\,v(t)<v(t_v)$, $v\in{\euS}$. 
Then $v(t)<v\big(p(x)\big)$ for all $x\in K_v$, $v\in{\euS}$\/}.
\vskip5pt
We next recall the theorem on the continuity of roots in the 
following form, see e.g.,~\cite{PZ} for details:
Let $q(Y)\in K[Y]$ be a polynomial of degree $n>0$ which has 
$n$ distinct roots $y_1,\dots,y_n$ in $K$. For polynomials
$q_v(Y)\in K_v[Y]$, we define $v(q_v-q):=\max\{v(a_{vi}-a_i)\}_i$, 
where $(a_{vi})_i$ and $(a_i)_i$ are the coefficients of $q_v(Y)$,
respectively~$q(Y)$. Then for every $v\in{\euS}$ there exists 
$\delta_v\in K^\times$ such that all polynomials $q_v(Y)\in K_v[Y]$ 
of degree $n$ satisfy: If $v(q_v-q) < v(\delta_v)$, then the roots 
$y_{v1},\dots,y_{vn}$ of $q_v(Y)$ are distinct and lie in~$K_v$. 
\vskip3pt
We view polynomials $\tilde q(Y)\in K^{\euS}[Y]$ of degree~$n$ 
and their conjugates $\tilde q^{\,\sigma}(Y)\in K^{\euS}[Y]$
as polynomials in $K_v[Y]$ via $K^{\euS}\hookrightarrow K_v$,
$v\in{\euS}$. Then for $\tilde\delta\in K^\times$ one has:
\vskip5pt
5) {\it Suppose that $v(\tilde\delta)\leq v(\delta_v)$, $v\in{\euS}$, and 
$\tilde q(Y)\in K^{\euS}[Y]$ satisfy $v(\tilde q^{\,\sigma}-q) < v(\tilde\delta)$ for 
\vskip0pt \ \ \
all $v\in{\euS}$, $\sigma\in G_K$. Then all the roots of 
$\,\tilde q(Y)\in K^{\euS}[Y]$ lie in $K^{\euS}$.\/}
\vskip7pt
\noindent
{\bf Key Lemma.} \ {\it In the above notations, 
let $\delta\in K^\times$ satisfy $v(\delta)\leq v(\tilde\delta)$ if $v$ is a 
valuation, and $v(2\hhb1\delta)\leq v(\tilde\delta)$ if $v$ is an
ordering. Then  $f(X,Y):=p(X)q(Y)-t\delta\in K[X,Y]$ is absolutely 
irreducible, and for all $x\in K^{\euS}$ the specialization 
$f_x(Y):=f(x,Y)\in K^{\euS}[Y]$ splits in linear factors in~$K^{\euS}[Y]$. 
Therefore, the field $K^{\euS}$ is not Hilbertian.\/}
\begin{proof} Let $x\in K^{\euS}$ be given. Then $x^\sigma\in K_v$ 
for all $v\in{\euS}$, $\sigma\in G_K$, thus $p(x^\sigma)\in p(K_v)$. 
Hence by the definition of $t$ we have $v(t) < v\big(p(x^\sigma)\big)$,
and in particular, $p(x^\sigma)\neq0$. Further, setting 
$a:=1/p(x)$ and $u:=at$, it follows that $a^\sigma=1/p(x^\sigma)$ 
and $u^\sigma=a^\sigma t$ lie in $K^{\euS}$ and 
$v(u^\sigma)<v(1)$ for all $v\in{\euS}$, $\sigma\in G_K$. Set 
$\tilde q(Y):=a f_x(Y)=q(Y)+u\delta\in K^{\euS}[Y]$. Then
the $G$-conjugates of $\tilde q(Y)$ are
$\tilde q^{\,\sigma}(Y)=q(Y)+u^\sigma\delta\in K^{\euS}[Y]$,
thus $\tilde q^{\,\sigma}-q=(u^\sigma-u)\delta$. On 
the other hand, one has that
$v(u^\sigma-u)\leq v(u^\sigma)+v(u)<v(1)+v(1)=v(2)$
if $v$ is an absolute value, respectively 
$v(u^\sigma-u)\leq \max\{v(u^\sigma),v(u)\} < v(1)$
if $v$ is a valuation. Thus using the definition of $\delta$,  
one has that $v(\tilde q^{\,\sigma}-q)=v(u^\sigma-u)\,v(\delta) 
<v(\tilde\delta)$ for all $v\in{\euS}$, $\sigma\in G_K$. Therefore,
by~point~5) above it follows that $\tilde q(Y)$ has all its roots 
in $K^{\euS}$ and therefore, so does $f_x(Y)$.
To conclude the proof of Key Lemma, notice that $t\delta\neq0$,
and $q(Y)$ is separable. Therefore $f(X,Y)$ is absolutely irreducible, 
see e.g.,~\cite{HP} for a proof. 
\end{proof}
%
%
\noindent
{\bf Remarks}. 
\vskip2pt
1) With a proof which is virtually identical with the one 
above, one proves that the intersection of all
the {\it $v$-topological Henselizations\/} of $K$, $v\in{\euS}$, is
not Hilbertian.
\vskip2pt
2) One can ``axiomatize'' the proof of the Key Lemma and
make it work for infinite families of orderings and/or valuations,
satisfying some obvious approximation conditions.


\begin{thebibliography}{10}

\bibitem{DebesHaran}
Pierre D\`ebes and Dan Haran.
\newblock Almost {H}ilbertian fields.
\newblock {\em Acta Arithmetica}, 88:269--287, 1999.

\bibitem{Efrat}
Ido Efrat.
\newblock Absolute {G}alois groups of $p$-adically maximal {P$p$C} fields.
\newblock {\em Forum Math.}, 3:437--460, 1991.

\bibitem{FriedJarden}
Michael~D. Fried and Moshe Jarden.
\newblock {\em Field Arithmetic}.
\newblock Ergebnisse der Mathematik III {\bf 11}. Springer, 2008.
\newblock {3rd edition, revised by M. Jarden}.

\bibitem{FHV93}
Michael~D. Fried, Dan Haran, and Helmut V\"olklein.
\newblock The absolute {G}alois group of the totally real numbers.
\newblock {\em Comptes Rendus de l'Acad\'emie des Sciences Paris}, 317:95--99,
  1993.

\bibitem{FHV94}
Michael~D. Fried, Dan Haran, and Helmut V\"olklein.
\newblock Real {H}ilbertianity and the field of totally real numbers.
\newblock In N.~Childress and J.~W. Jones, editors, {\em Arithmetic Geometry},
  Contemporary Mathematics 174, pages 1--34. American Mathematical Society,
  1994.

\bibitem{HJPd}
Dan Haran, Moshe Jarden, and Florian Pop.
\newblock The absolute {G}alois group of the field of totally {$S$}-adic
  numbers.
\newblock {\em Nagoya Mathematical Journal}, 194:91--147, 2009.

\bibitem{HJPe}
Dan Haran, Moshe Jarden, and Florian Pop.
\newblock The absolute {G}alois group of subfields of the field of totally
  {$S$}-adic numbers.
\newblock {\em Functiones et Approximatio Commentarii Mathematici}, 2012.

\bibitem{HP}
B. Heinemann and A. Prestel.
\newblock Fields regularly closed with respect to finitely many valuations and orderings.
\newblock {\em Can.\ Math.\ Soc.\ Conf.\ Proc.} {\bf4} (1984), pp.297--336.


\bibitem{Pop}
Florian Pop.
\newblock Embedding problems over large fields.
\newblock {\em Annals of Mathematics}, 144(1):1--34, 1996.

\bibitem{PZ}
Alexander Prestel and Martin Ziegler.
\newblock Model theoretic methods in the theory of topological fields.
\newblock {\em Journal reine angew.\ Math.} {299/300}:318--341, 1978.

\bibitem{Taylor}
Richard Taylor.
\newblock Galois representations.
\newblock {\em Annales de la Facult\'e des Sciences de Toulouse}, 8(1):73--119,
  2004.

\end{thebibliography}
\end{document}